\newtheorem{theorem}{Theorem}
\newtheorem{lemma}[theorem]{Lemma}
\newtheorem{conjecture}[theorem]{Conjecture}
\newtheorem{proposition}[theorem]{Proposition}
\newtheorem{corollary}[theorem]{Corollary}
\newcommand{\binomial}[2]{{#1 \choose #2}}
\newcommand{\gaussian}[2]{{#1 \brack #2}}
\newcommand{\bound}[1]{\mathsf{s}_{\le #1}}
\newcommand{\Dav}{\mathsf{D}}
\newcommand{\N}{\mathbb N}
\begin{document}

\title[Coding theory and Davenport constants]
{An application of coding theory\\ to estimating Davenport constants}

\author[A. Plagne, W.~A. Schmid]{Alain Plagne and Wolfgang A.\ Schmid}

\address{\rm \noindent Alain {\sc Plagne}, Wolfgang A. {\sc Schmid}\newline\indent
Centre de Math{\'e}matiques Laurent Schwartz\newline\indent
UMR 7640 du CNRS\newline\indent
{\'E}cole polytechnique\newline\indent
91128 Palaiseau cedex\newline\indent
France}
\email{plagne@math.polytechnique.fr, schmid@math.polytechnique.fr}

\thanks{2010 MSC: 11B30, 11B75, 11P70, 94B05, 94B65 \\
W.S. is supported by the Austrian Science Fund (FWF): J 2907-N18}

\begin{abstract}
We investigate a certain well-established
generalization of the Davenport constant.
For $j$ a positive integer (the case $j=1$, is
the classical one) and a finite Abelian group $(G,+,0)$,
the invariant $\Dav_j(G)$ is defined as the smallest $\ell$
such that each sequence over $G$ of length at least $\ell$ has
$j$ disjoint non-empty zero-sum subsequences.
We investigate these quantities for elementary $2$-groups
of large rank (relative to $j$). Using tools from coding theory,
we give fairly precise estimates for these quantities.
We use our results to give improved bounds for
the classical Davenport constant of certain groups.
\end{abstract}

\maketitle

\section{Introduction}

For a given Abelian group $(G,+,0)$, the \emph{Davenport constant}, denoted  $\Dav (G)$,
is defined as the smallest integer $\ell$ such that
each sequence over $G$ of length at least $\ell$ has a non-empty zero-sum subsequence,
\emph{i.e.} the sum of its terms is $0$.
Equivalently, $\Dav (G)$
is the maximal length of a minimal zero-sum sequence over $G$, \emph{i.e.}
the maximal length of a sequence of elements of $G$ summing to
$0$ and with no proper subsequence summing to $0$.

It is considered as a central object in combinatorial number theory
since Davenport popularized it in the 60's (as reported in \cite{Ol}),
notably for its link with algebraic number theory, see \emph{e.g.}
\cite{GH} or \cite{GB}.
In fact it seems that the first paper that deals with this invariant
was written by Rogers \cite{Ro}, who himself attributes the paternity of
the problem to Sudler.

This invariant has become the prototype of algebraic invariants
of combinatorial flavour. Since the 60's, the theory of these invariants
has highly developed in several directions; see for instance the survey article
\cite{GG} or Chapters 5, 6, and 7 of \cite{GH}.

Let $G$ be written, as is always possible, as a direct sum of cyclic
groups $G\cong C_{n_1} \oplus \cdots \oplus C_{n_r}$ with integers
$1< n_1 \mid \dots \mid n_r$ ($r$ denotes the rank of $G$,
and $n_r$ the exponent, except for $r=0$ where
the exponent is $1$). Then, the basic lower bound for Davenport constant is
\begin{equation}
\label{lowb}
\Dav (G) \geq 1+ \sum_{i=1}^r (n_i - 1);
\end{equation}
to see this, note that a sequence containing only, for each cyclic
component $C_{n_i}$ ($1 \leq i \leq n$), one generating element $n_i-1$ times,
has no non-empty zero-sum subsequence.

It is known that for groups of rank at most two
and for $p$-groups ($p$, a prime),
inequality \eqref{lowb} is in fact an equality;
this was obtained independently in \cite{vEB} and \cite{Ol,Ol2}.
For groups of rank at least four, equality is definitely not
the rule (see \cite{Baa, vEB,GS}).
In the case of groups of rank three, it is conjectured that equality
always holds but this conjecture is wide open (see \cite{GG}).

Concerning upper bounds, the best general result is the following
\[
\Dav (G) \leq \exp(G) \left( 1 + \log \frac{|G|}{\exp(G)} \right)
\]
proved in \cite{vEBK, Meshu}.

In view of the depicted situation, it appears that it is generically
very difficult to determine the Davenport constant of a group
(of rank at least three).
In particular, despite various works related to the Davenport constant over the years,
its actual value was only determined for a few additional -- beyond the ones known
since the end of the 60's -- families of groups; see \cite{BS} for a recent contribution.

The \emph{$j$-wise Davenport constants} are defined depending
on a positive integer $j$. We define $\Dav_j(G)$ to be the smallest
$\ell$ such that each sequence over $G$ of length at least $\ell$
has $j$ disjoint non-empty zero-sum subsequences.
Equivalently, the maximal length of a zero-sum sequence over $G$
that cannot be decomposed into $j+1$ non-empty zero-sum sequences.
Evidently $\Dav_1 (G) = \Dav (G)$ and for any positive $j$ one has
$\Dav_j (G) \leq j \Dav (G)$.

This variant of the Davenport constant was introduced by Halter-Koch \cite{HK}
in order to determine the order of magnitude
of the counting function of the set of algebraic integers of some number field
that are not divisible by a product of $j+1$ irreducible algebraic
integers of this number field. It is
\[
\frac{x}{\log x} (\log \log x)^{\Dav_j(G)-1}\]
where $G$ denotes the ideal class group of the number field considered.
Moreover, knowledge of these invariants is useful
in investigations of the Davenport constant itself (cf.~below for
details and recall \emph{e.g.} that a considerable part of \cite{BS},
determining $\Dav(C_3^2\oplus C_{3n})$,
is devoted to determining $\Dav_j(C_3^3)$ and closely related problems).

The case of cyclic groups is the simplest one since then it is easy
to see that $jn-1 < \Dav_j (C_n)\le j \Dav (C_n)=jn$ (for the lower bound,
simply take a generating element repeated that number of times). The case of groups of rank two
is also known \cite{HK}, as well as the case of certain closely
related groups \cite[Section 6.1]{GH}.
But in general computing (even bounding)
$\Dav_j (G)$ is quite more complicated than for $\Dav (G)$,
in particular for (elementary) $p$-groups.
For example, $\Dav_j(G)$ for all $j$, is only known for the following
elementary $p$-groups of rank greater than two: $C_2^3$, $C_2^4$, $C_2^5$, and $C_3^3$ (see \cite{DOQ,FS,BS}).

The main difference is that it seems that the types of arguments used to
determine $\Dav(G)$ for $p$-groups cannot be applied.
At first this might seem surprising, however
recall that the same phenomenon is encountered for the invariant $\eta(G)$ (cf.~Section \ref{sec2})
and other closely related invariants such as the Erd\H{o}s--Ginzburg--Ziv constant.
In particular, the difficulty
of the problem seems to increase with the rank of the groups considered
(to be precise, the key quantity is the size of the rank
relative to the exponent), as then
$\Dav(G)$ and $\eta(G)$ are far apart (cf.~Section \ref{sec2} for the relevance of this
fact).

In this paper and from now on, we focus on the case
of elementary $2$-groups with large rank.
The reason for this is two-fold.
On the one hand, it is an interesting case;
the rank is `maximal' relative to the exponent.
On the other hand, the special nature of the group
allows certain arguments that fail in more general
situations; for instance recall that to show
$\eta(C_2^r)=2^r$ is almost trivial, yet the problem
of determining $\eta(C_p^r)$ for any (fixed) odd prime $p$
and arbitrary $r$ is wide open (it is even open, which
order of magnitude is to be expected).

While it is easy to determine that $\Dav_1(C_2^r)=r+1$ -- we effectively
consider a vector space over a field with a unique non-zero element, and having
no non-empty zero-sum subsequences is thus equivalent to linear
independence -- the situation becomes more complicated for larger values of $j$.
We observe that
\[
r< \Dav_1(C_2^r)\le \Dav_j(C_2^r) \le j \Dav_1(C_2^r) \le j(r+1).
\]
Thus, for a fixed positive integer $j$, the sequence
$(\Dav_j (C_2^r))_{r \in \N}$ has to grow linearly.
Our main aim here is to make this statement more precise and to study
the following relevant quantity
\[
\frac{\Dav_j (C_2^r)}{r},
\]
of which we examine the asymptotic behaviour when $r$ becomes large.
The quantities
\[
\alpha_j=\liminf_{r \rightarrow  +\infty} \frac{\Dav_j (C_2^r)}{r} \hspace{1cm}
\text{ and } \hspace{1cm}
\beta_j=\limsup_{r \rightarrow  +\infty} \frac{\Dav_j (C_2^r)}{r}
\]
will be considered and investigated. By the above crude reasoning
we only get that $1 \leq \alpha_j \leq \beta_j \le j$. Our first theorem,
which gives explicit estimates for small values of $j$, improves on this.
It is a generalization of results by Koml\'os (lower bound, quoted in
\cite{M, CL}) and by Katona and Srivastava (upper bound) \cite{KS}
for the case $j=2$ that we recall for the sake of completeness
in the statement of the theorem.

\begin{theorem}
\label{thm_expl}
For each sufficiently large integer $r$ we have
\[
\begin{array}{ccccc}
1.261 \ r &  \le &  \Dav_2(C_2^r) & \le &   1.396 \  r, \nonumber \\
1.500 \ r & \le & \Dav_3(C_2^r) & \le   &  1.771 \  r, \nonumber \\
1.723 \ r & \le & \Dav_4(C_2^r)  & \le  & 2.131 \  r, \nonumber \\
1.934 \ r & \le & \Dav_5(C_2^r)  & \le & 2.478 \  r, \nonumber \\
2.137 \ r & \le & \Dav_6(C_2^r)  & \le & 2.815 \  r, \nonumber \\
2.333 \ r & \le & \Dav_7(C_2^r)  & \le & 3.143 \  r, \nonumber \\
2.523 \ r & \le & \Dav_8(C_2^r)  & \le & 3.464 \  r, \nonumber \\
2.709 \ r & \le & \Dav_9(C_2^r)    & \le & 3.778 \  r, \nonumber \\
2.890 \ r & \le & \Dav_{10}(C_2^r) & \le & 4.087 \  r. \nonumber \\
\end{array}
\]
\end{theorem}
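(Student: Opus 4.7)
The plan is to translate the zero-sum question into one about binary linear codes and then to invoke rate bounds from coding theory. A sequence $(v_1, \ldots, v_n)$ over $C_2^r \cong \mathbb{F}_2^r$ is exactly the column data of a parity-check matrix of a binary linear code $C$ of length $n$; when the $v_i$ span $\mathbb{F}_2^r$ one has $\dim C = n-r$. Under this dictionary, non-empty zero-sum subsequences correspond bijectively to non-zero codewords of $C$, and a collection of $j$ disjoint non-empty zero-sums corresponds to a family of $j$ non-zero codewords with pairwise disjoint supports. Calling a binary linear code \emph{$j$-wise intersecting} when no such $j$-tuple of codewords exists, we obtain that $\Dav_j(C_2^r)-1$ is the maximum length of a $j$-wise intersecting binary code of codimension at most $r$; the theorem therefore amounts to an asymptotic estimate on the rate of such codes.

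For the lower bound, I would use a first-moment probabilistic argument generalising Koml\'os's. Let $v_1, \ldots, v_n$ be independent, uniformly random elements of $\mathbb{F}_2^r$. Then the expected number of ordered $j$-tuples $(I_1,\ldots,I_j)$ of pairwise disjoint non-empty subsets of $\{1, \ldots, n\}$ such that $\sum_{i \in I_\ell} v_i = 0$ for every $\ell$ is, by the independence of the partial sums over disjoint index sets, at most $(j+1)^n \cdot 2^{-jr}$: the factor $(j+1)^n$ counts the ways of assigning each index to one of $j+1$ slots (the sets $I_1,\ldots,I_j$ or none), and each of the $j$ partial sums vanishes with probability exactly $2^{-r}$. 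Requiring this expectation to be less than one yields $n < jr / \log_2(j+1)$, hence the existence of a sequence of this length without $j$ disjoint non-empty zero-sums. This proves $\Dav_j(C_2^r) \geq jr/\log_2(j+1)$, and numerical evaluation for $j = 2, \ldots, 10$ reproduces exactly the decimals in the left column of the statement (for instance, $2/\log_2 3 = 1.261\ldots$ and $3/\log_2 4 = 1.500$).

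For the upper bound, I would bound the rate of $j$-wise intersecting binary codes. The case $j=2$ is the Katona--Srivastava bound on intersecting codes, obtained by combining Plotkin-type weight inequalities with additional combinatorial input. For general $j$, I would generalise this approach: the $j$-wise intersection property restricts the weight distribution, and coupling these restrictions with Delsarte / linear-programming tools on the dual code should yield an explicit rate bound. Evaluating numerically for each $j \in \{2, \ldots, 10\}$ would then produce the announced constants $1.396, 1.771, \ldots, 4.087$ in the right column.

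The principal obstacle I anticipate is the upper bound. The lower-bound side reduces to a crisp first-moment calculation that produces the elegant formula $jr/\log_2(j+1)$. In contrast, bounding the rate of $j$-wise intersecting codes for $j \geq 3$ requires a genuine generalisation of the Katona--Srivastava argument---one must simultaneously control the supports of $j$ codewords rather than just pairs---and the numerical optimisation for each $j$ in the stated range is the main technical labour of the proof.
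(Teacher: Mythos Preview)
Your lower-bound argument is correct and is essentially the paper's argument in probabilistic dress: the paper counts $j$-inadmissible $[n,n-r]$ codes against all $[n,n-r]$ codes and arrives at the same ratio $(j+1)^n 2^{-jr}$, hence the same threshold $n < jr/\log_2(j+1)$.

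Your upper-bound plan, however, is both different from the paper's and not an actual argument. You propose to bound directly the rate of $j$-wise intersecting codes by generalising Katona--Srivastava and invoking Delsarte/LP machinery, and you acknowledge this as the main obstacle. The paper sidesteps this obstacle entirely. It does \emph{not} analyse $j$-wise intersecting codes for $j\ge 3$; instead it uses a greedy/recursive reduction to the classical minimum-distance problem. The key inequality is
\[
\Dav_{j+1}(G)\;\le\;\min_{i}\;\max\{\,\Dav_j(G)+i,\ \bound{i}(G)-1\,\},
\]
where $\bound{i}(C_2^r)$ is the smallest length forcing a zero-sum of length $\le i$. In coding terms, $\bound{i}(C_2^r)>n$ means there is an $[n,n-r]$ code of minimum distance $>i$; so any asymptotic rate--distance upper bound $f$ immediately controls $\bound{i}$. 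The paper plugs in the MRRW bound $f_1$ and iterates: starting from $\Dav_1(C_2^r)\le (1+o(1))r$, at each step one solves $1-\tfrac{1}{p+c}=f_1\bigl(\tfrac{c}{p+c}\bigr)$ for $c$ and gets $\Dav_{j+1}(C_2^r)\le (p+c)r$. The numbers $1.396,\,1.771,\,\ldots,\,4.087$ are the partial sums $U_j$ of this recursion.

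So the missing idea is precisely this inductive step: peel off one short zero-sum (short because of a minimum-distance bound), then use the bound for $j$. This reduces the whole upper bound to the well-understood question of rate versus minimum distance, rather than to a new theory of $j$-wise intersecting codes.
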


As will be apparent from our arguments, having our method at hand,
to expand this list further is merely a computational effort.

Notice that the question whether $\alpha_j=\beta_j$ (independently of the
value that this constant would take) seems not obvious.

After the study of small values for $j$, we turn our attention
to the case of large $j$'s.
Although we are unable to prove that $\alpha_j$ and $\beta_j$ are equal,
which seems conceivable, we show that they at least grow at the same speed
and more precisely determine their order of magnitude.

\begin{theorem}
\label{thm_asymp}
When $j$ tends to infinity, we have the following:
\[
\log 2 \
\left( \frac{j}{\log j} \right) \lesssim
\liminf_{r \rightarrow + \infty} \frac{\Dav_j (C_2^r)}{r} \le
\limsup_{r \rightarrow + \infty} \frac{\Dav_j (C_2^r)}{r} \lesssim
2 \log 2 \ \left( \frac{j}{\log j} \right) .
\]
\end{theorem}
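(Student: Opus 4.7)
My approach exploits the correspondence between sequences over $C_2^r \cong \mathbb{F}_2^r$ and binary linear codes. For a sequence $S = (g_1, \ldots, g_n)$ in $\mathbb{F}_2^r$, let $H$ denote the $r \times n$ matrix with columns $g_1, \ldots, g_n$; zero-sum subsequences of $S$ correspond bijectively to codewords of $C_S := \ker H \subseteq \mathbb{F}_2^n$, which has dimension at least $n - r$. A family of $j$ disjoint non-empty zero-sum subsequences is precisely a family of $j$ nonzero codewords of $C_S$ with pairwise disjoint supports, and the minimum Hamming distance of $C_S$ coincides with the minimum length of a non-empty zero-sum subsequence of $S$. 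Throughout I write $h(x) = -x \log_2 x - (1-x)\log_2(1-x)$ for the binary entropy function and $V(m,t) = \sum_{i \le t}\binom{m}{i}$.

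\textbf{Lower bound.} For each sufficiently large $r$ I plan to construct a sequence of length $n \sim r/h(1/j)$ in $\mathbb{F}_2^r$ whose associated code has minimum distance $d > n/j$. Any $j$ codewords with pairwise disjoint supports would then have total support size at least $jd > n$, which is impossible, so such a sequence certifies $\Dav_j(C_2^r) > n$. The matrix $H$ is built column by column by the standard Gilbert--Varshamov greedy procedure: each new column is chosen to avoid being the sum of any at most $d - 2$ previously chosen columns, which succeeds as long as $V(n-1, d-2) < 2^r$. Taking $d = \lfloor n/j\rfloor + 1$ and using $V(n, n/j) \le 2^{n h(1/j)}$, one reaches every $n$ with $n h(1/j) \le r$, hence $n \gtrsim r/h(1/j)$. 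Combined with $h(1/j) = (1+o_j(1))(\log j)/(j \log 2)$, this yields $\alpha_j \ge (1 - o_j(1))(\log 2)\,j/\log j$.

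\textbf{Upper bound.} The plan is to extract non-empty zero-sum subsequences iteratively. Given a sequence of length $m$ in $\mathbb{F}_2^r$, among the $V(m,t)$ subsets of size at most $t$, whenever $V(m,t) > 2^r$ two distinct such subsets share a sum, so their symmetric difference is a non-empty zero-sum subsequence of length at most $2t$. Choosing $t$ minimal with this property and using $V(m,t) \le 2^{m h(t/m)}$, one extracts a zero-sum of length at most $(1+o(1)) \cdot 2 m h^{-1}(r/m)$, which for $m/r \to \infty$ is $(1+o(1)) \cdot 2r/\log_2(m/r)$. Removing this subsequence and iterating, the successive lengths $n_0 > n_1 > \cdots > n_{j-1}$ satisfy $n_{i+1} \le n_i - (2 + o(1)) r/\log_2(n_i/r)$. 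Viewing this as a discretisation of the ODE $du/di = -2/\log_2 u$ with $u = n/r$ and integrating, the condition $n_{j-1} > r$ (which, via $\Dav_1(C_2^r) = r+1$, guarantees a final extraction) reduces asymptotically to $n_0 \log_2(n_0/r) \ge (2 + o(1)) jr$. This is satisfied by $n_0 = (1 + \varepsilon) \cdot 2 j r/\log_2 j$ for every fixed $\varepsilon > 0$ once $j$ is large enough, so $\beta_j \le (1 + o_j(1)) \cdot 2 (\log 2)\,j/\log j$.

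\textbf{Main obstacle.} The technical heart is controlling the upper bound iteration cleanly enough to preserve the constant $2\log 2$. One must verify that $n_i/r$ remains large throughout all $j$ steps, so that the asymptotic $h^{-1}(r/n_i) \sim (r/n_i)/\log_2(n_i/r)$ applies uniformly, and that the $o(1)$ errors accumulated from entropy inversion and from the passage between the discrete recursion and the ODE do not degrade the leading constant. A clean implementation likely separates the iteration into a bulk phase, in which $n_i/r$ remains very large and each extraction removes essentially $2r/\log_2(n_0/r)$ elements, and a short terminal phase handled by the trivial bound $\Dav_1(C_2^r) = r+1$.
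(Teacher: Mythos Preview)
Your proposal is correct. For the upper bound it is essentially the paper's argument: both extract short zero-sums iteratively via the pigeonhole (Hamming) bound and analyse the resulting recursion asymptotically; the paper packages this through the sequence $V_{j+1}=V_j+v_{j+1}$ with $1/(V_j+v_{j+1})=h\bigl(v_{j+1}/(2(V_j+v_{j+1}))\bigr)$ and controls $\log V_j$ from below via the already-established lower bound, while you phrase the same recursion as an ODE in $u=n/r$. Your lower bound, by contrast, takes a genuinely different route. The paper uses a Koml\'os-style counting argument: among all $[n,n-r]$ codes at most $(j+1)^n\gaussian{n-j}{n-r-j}$ contain $j$ nonzero codewords with pairwise disjoint supports, and this is fewer than $\gaussian{n}{n-r}$ whenever $n<rj/\log_2(j+1)$. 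You instead build, via Gilbert--Varshamov, a code of minimum distance $d>n/j$, so that $j$ disjoint nonzero codewords are ruled out on weight grounds alone, obtaining $\alpha_j\ge 1/h(1/j)$. Both give the leading term $(\log 2)\,j/\log j$, so either suffices for Theorem~\ref{thm_asymp}; your argument is constructive and avoids the Gaussian-binomial computation, but is slightly weaker at second order ($1/h(1/j)\sim j/\log_2(ej)$ versus $j/\log_2(j+1)$) and would not recover the numerical lower bounds of Theorem~\ref{thm_expl}.
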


We believe that the bound for the $\liminf$ is closer to the actual value.
A heuristic suggests that the $\limsup$ in Theorem \ref{thm_asymp}
is in fact close to  $\log 2\ (j /\log j)$ as well.
More precisely, we formulate the following conjecture.

\begin{conjecture}
\label{conjec}
For any positive integer $j$, the limit
\[
\gamma_j = \lim_{r \rightarrow + \infty} \frac{\Dav_j (C_2^r)}{r}
\]
exists and one has
\[
\gamma_j \sim \log 2\ \left( \frac{j}{\log j} \right)
\]
as $j$ tends to infinity.
\end{conjecture}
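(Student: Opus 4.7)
The conjecture decomposes into two essentially independent assertions: (i) existence of $\gamma_j = \lim_{r \to \infty} \Dav_j(C_2^r)/r$ for each fixed positive integer $j$, and (ii) the asymptotic $\gamma_j \sim \log 2 \cdot (j/\log j)$ as $j \to \infty$. We treat them in that order.

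For (i) the natural strategy is to establish a superadditivity of Fekete type. A block-diagonal construction already provides a partial version: take bad sequences $S_1$ over $C_2^r$ admitting at most $j_1-1$ disjoint non-empty zero-sum subsequences and $S_2$ over $C_2^s$ admitting at most $j_2-1$, and form the concatenation in $C_2^{r+s} \cong C_2^r \oplus C_2^s$ consisting of the elements $(a, 0)$ for $a \in S_1$ and $(0, b)$ for $b \in S_2$. Any non-empty zero-sum subsequence of this concatenation splits into its first-block part and its second-block part, and the vanishing of each coordinate forces each non-empty part to be itself a zero-sum of the corresponding factor. Hence the combined sequence admits at most $(j_1-1)+(j_2-1)$ disjoint non-empty zero-sums, yielding
\[
\Dav_{j_1+j_2-1}(C_2^{r+s}) \ \ge\ \Dav_{j_1}(C_2^r) + \Dav_{j_2}(C_2^s) - 1.
\]
The unpleasant feature is that the $j$-parameter jumps from $j_i$ to $j_1+j_2-1$. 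To upgrade to a superadditivity at a fixed value of $j$, one would add a small overlap (of $O_j(1)$ coordinates) between the two blocks, arranged so that every cross-block zero-sum created by the gluing can be absorbed into an already counted zero-sum of one of the factors. An inequality of the form $\Dav_j(C_2^{r+s}) \ge \Dav_j(C_2^r) + \Dav_j(C_2^s) - c_j$ would then follow, and Fekete's lemma would deliver the existence of $\gamma_j$.

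For (ii), once (i) is available, the lower bound of Theorem~\ref{thm_asymp} already gives $\gamma_j \gtrsim \log 2 \cdot (j/\log j)$. What remains is to tighten the $\limsup$ bound of the same theorem by a factor of $2$. This factor presumably enters because the underlying covering-code argument is applied as a single round at a carefully tuned radius. We would iterate the construction: extract a first batch of disjoint zero-sums using a covering of some radius $\rho_1$, apply a finer covering of radius $\rho_2 < \rho_1$ to the residual sequence, and continue, optimising the schedule of radii against the number of rounds. A heuristic computation suggests that this multi-round version should save precisely the missing factor of $2$.

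The decisive difficulty lies in (i). The block-diagonal gluing does not preserve $j$, and any genuinely more efficient gluing appears to create cross-block zero-sums whose suppression costs coordinates linearly in $r$ or $s$ rather than $O_j(1)$, which would destroy the Fekete argument. Even for $j=2$, where the bounds of Theorem~\ref{thm_expl} have been sharpened over decades, existence of $\gamma_2$ is open. A realistic intermediate target is to establish existence of $\gamma_j$ along a subsequence of ranks of positive density; combined with (ii) in that restricted sense, this would already constitute substantial progress toward the conjecture.
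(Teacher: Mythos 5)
The statement you are addressing is Conjecture \ref{conjec}: the paper does not prove it, and offers only a heuristic (Section \ref{sec_heu}) based on treating the Gilbert--Varshamov function $1-h(\delta)$ as if it were an upper-bounding function. Your proposal is likewise not a proof, and both of its halves end in a gap that is real. For (i), the block-diagonal concatenation does yield $\Dav_{j_1+j_2-1}(C_2^{r+s}) \ge \Dav_{j_1}(C_2^r)+\Dav_{j_2}(C_2^s)-1$, but, as you note, this superadditivity compounds the parameter $j$ rather than the rank, so Fekete's lemma cannot be invoked at fixed $j$. The proposed repair --- an overlap of $O_j(1)$ coordinates that ``absorbs'' every cross-block zero-sum --- is not a construction but a wish: a cross-block zero-sum subsequence uses a nonzero part from each block, and forcing each of the many candidate pairings of such parts to collapse onto already-counted zero-sums is exactly the kind of constraint that costs a number of coordinates growing with $\min(r,s)$, as you yourself concede. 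Nothing in the paper or in the literature it cites establishes the existence of $\gamma_j$ even for $j=2$, and your proposal does not change that.

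For (ii) you misidentify the source of the factor $2$. It is not an artifact of running a covering argument ``in a single round'': the paper's upper bound already iterates, via \eqref{eq_rec} and Lemma \ref{centrallemma}, once per increment of $j$, and each step is limited by the best available upper bound on the rate of a binary code of given relative distance. The factor $2$ is precisely the gap, as $\delta\to 0$, between the Hamming-type bound $1-h(\delta/2)$ (where $h(\delta/2)\sim \tfrac{1}{2}\,\delta\log_2(1/\delta)$) and the Gilbert--Varshamov function $1-h(\delta)$ (where $h(\delta)\sim \delta\log_2(1/\delta)$); the stronger known bounds (Elias--Bassalygo, MRRW) do not close this gap in the small-$\delta$ regime, which is why the paper remarks that Elias--Bassalygo ``would not yield a better result'' asymptotically. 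Removing the factor $2$ along your lines would require either showing that the codes arising here cannot asymptotically beat Gilbert--Varshamov --- which is the content of the Cohen--Lempel heuristic the paper extrapolates, and is open --- or an argument that bypasses coding-theoretic rate bounds altogether. A ``schedule of radii'' supplies neither; each round would still be governed by the same rate-versus-distance bound, and the losses would accumulate rather than cancel.
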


For results in the converse scenario, that is fixed but arbitrary $r$, and
$j$ goes to infinity, see \cite{FS}.

One of the main reasons for studying $j$-wise Davenport constants is
the fact that they are important in obtaining results on the Davenport
constant itself.
The connection is encoded in the following inequality, due to Delorme,
Ordaz, and Quiroz \cite{DOQ}.
For a finite Abelian group $G$ and a subgroup $H$ one has
\begin{equation}
\label{eq_DOQ}
\Dav(G) \le \Dav_{\Dav(H)}(G/H).
\end{equation}
Among others, this inequality encodes the classical form of the
inductive method, originally introduced to
determine the Davenport constant for groups of rank two (see \cite{Ro,vEB,Ol2}).

We can apply our results on the $j$-wise Davenport constants to obtain improved bounds
on the Davenport constant for certain types of groups (the $2$-rank has to be
`large' relative to the order).
We only formulate it explicitly for a quite special type of group,
which however, due to its extremal nature, is of relevance in this context.

\begin{corollary}
\label{corrr}
When $n$ tends to infinity, we have
\[
\limsup_{r \to +\infty} \frac{\Dav (C_2^{r-1} \oplus C_{2n})}{r}
\lesssim   2 \log 2\ \frac{n}{\log n}.
\]
\end{corollary}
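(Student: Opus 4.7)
The plan is to apply the Delorme--Ordaz--Quiroz inequality \eqref{eq_DOQ} to $G = C_2^{r-1} \oplus C_{2n}$ with a subgroup $H$ chosen so that $G/H$ is an elementary $2$-group of rank $r$; Theorem \ref{thm_asymp} then supplies the asymptotics in $n$.

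The natural choice is $H = \{0\}^{r-1} \oplus 2C_{2n}$, namely the unique index-two subgroup sitting inside the $C_{2n}$ summand. Then $H \cong C_n$, so $\Dav(H) = n$, and $C_{2n}/2C_{2n} \cong C_2$ gives $G/H \cong C_2^{r-1} \oplus C_2 = C_2^r$. Inequality \eqref{eq_DOQ} therefore becomes
\[
\Dav(C_2^{r-1} \oplus C_{2n}) \le \Dav_n(C_2^r).
\]

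Dividing by $r$ and passing to the $\limsup$ in $r$, the right-hand side is precisely $\beta_n$ in the notation of the paper, so
\[
\limsup_{r \to \infty} \frac{\Dav(C_2^{r-1} \oplus C_{2n})}{r} \le \beta_n.
\]
The upper bound of Theorem \ref{thm_asymp} asserts that $\beta_n \lesssim 2\log 2 \,(n/\log n)$ as $n \to \infty$, which combined with the previous inequality is exactly the announced estimate.

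There is essentially no obstacle here beyond identifying the correct subgroup: in order to invoke Theorem \ref{thm_asymp}, the quotient must be elementary abelian of exponent $2$, and among such quotients one wants the one of maximal rank (equivalently $H$ of smallest possible order), which is realized by the choice above. All of the substantive work is hidden inside Theorem \ref{thm_asymp}; the corollary is then little more than a bookkeeping step.
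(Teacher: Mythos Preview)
Your proof is correct and follows exactly the paper's approach: apply \eqref{eq_DOQ} with $H\cong C_n$ so that $G/H\cong C_2^r$, obtain $\Dav(C_2^{r-1}\oplus C_{2n})\le \Dav_n(C_2^r)$, and then invoke Theorem~\ref{thm_asymp}. The paper's own argument is just a terser version of what you wrote.
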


For comparison, the general bound mentioned above yields only
\[
\limsup_{r \to +\infty} \frac{\Dav (C_2^{r-1} \oplus C_{2n})}{r}
\lesssim 2 \log 2\ n.
\]

We immediately give the short proof of this result.
\begin{proof}
By \eqref{eq_DOQ}, we get that
\[
\Dav(C_2^{r-1}\oplus C_{2n})\le \Dav_{\Dav(C_n)}(C_2^r)=\Dav_{n}(C_2^r).
\]
By Theorem \ref{thm_asymp}, the claim follows.
\end{proof}

We finish this Introduction with outlining the plan of the present article.
In Section \ref{sec2}, we explain the methods and the prerequisites
we need in the course of this article.
In Section \ref{sec4}, we derive the lower bounds of our two results
while in Section \ref{sec3} the upper bounds are proved.
Finally, in Section \ref{sec_heu}, we discuss the heuristic
leading to Conjecture \ref{conjec}. If true, this heuristic would establish
at least the asymptotic equivalence of $\alpha_j$ and $\beta_j$
and imply the second part of Conjecture \ref{conjec}.

\section{The methods}
\label{sec2}

We outline the methods we use to establish our results.

\subsection{Zero-sum subsequences of bounded length}
Let $G$ be an Abelian group.
For $x$ a real number, let $\bound{x}(G)$ denote the smallest element
$\ell\in \mathbb{N}\cup\{+\infty\}$ such that each sequence of length
at least $\ell$ has a zero-sum subsequence of length at most $x$.
Evidently, $\bound{x}(G) = \bound{\lfloor x \rfloor}(G)$,
yet for technical reasons it is useful to define $\bound{x}(G)$ for non-integral $x$ as well.

A prominent special case of this definition is $x=\exp(G)$, the resulting
invariant is typically denoted by $\eta(G)$;
note that for $x < \exp(G)$, one has $\bound{x}(G)=+\infty$.
Also, note that for $x\ge  \Dav(G)$ we have $\bound{x}(G)=\Dav(G)$.

For results on $\bound{x}(G)$, for generic $x$, mainly for elementary $2$- and $3$-groups,
see \emph{e.g.} \cite{DOQ,BS}, and \cite{CZ2} for a very closely related problem (cf.~below).
For recent results on $\eta(G)$, focusing on lower bounds, see \cite{EEGKR,Edel}.

To determine $\eta(G)$ seems to be
a very difficult problem in general. For example, $\eta(C_3^6)$ was determined only recently \cite{Pot},
despite the fact that the problem of determining $\eta(C_3^r)$ is fairly popular
(see \cite{EEGKR} for a detailed outline of several
problems, and their respective history, that are equivalent
to determining $\eta(C_3^r)$).

Yet, in the case of elementary $2$-groups, to determine both
$\eta(C_2^r)$ and $\Dav(C_2^r)$ is almost trivial.
However, for other values of $x$ even for elementary $2$-groups
the problem of determining $\bound{x}(C_2^r)$ is not at all trivial, namely, it is equivalent to a central
problem of coding theory (cf.~below).

It is known, in particular by the work of
Delorme, Ordaz, and Quiroz \cite{DOQ},
that the invariants $\bound{x}(G)$ can
be used to derive upper bounds for $\Dav_j(G)$.
More specifically, we have (this is Lemma 2.4 in \cite{FS})
\begin{equation}
\label{eq_rec}
\Dav_{j+1} (G) \le \min_{i\in \mathbb{N}} \max
\{ \Dav_j(G) + i , \bound{i}(G) - 1 \}.
\end{equation}
Thus, knowing $\Dav_1(G)=\Dav(G)$ and the constants $\bound{i}(G)$,
or bounds for these constants, one can obtain, recursively applying
estimate \eqref{eq_rec}, bounds for $\Dav_j(G)$.
Notice however that even exact knowledge of $\Dav(G)$ and $\bound{i}(G)$
for all $i$ can be insufficient to determine $\Dav_j(G)$ exactly
via this method, which in general is not optimal.

\subsection{Coding theory enters the picture}

We recall that the link between coding theory and
combinatorial number theory is not new. For instance, Cohen, Litsyn, and Z\'emor \cite{CLZ}
 used coding theoretic bounds in
the Sidon problem. The general paper \cite{CZ2} by two
of these authors provides a worthwhile introduction to the links
between the two problematics. Also, Freeze \cite{F} used
coding theory in the present context.

One of the reasons for this connection is the following folkloric lemma \cite{MWS}.

\begin{lemma}
\label{PCM}
The minimal distance of a binary linear code ${\mathcal C}$ is equal to
the minimal length of a zero-sum subsequence of columns of a
parity check matrix of ${\mathcal C}$.
\end{lemma}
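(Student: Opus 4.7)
The plan is to unpack both sides of the claimed equality directly from the definitions, using that for a binary linear code everything is linear over $\mathbb{F}_2$. Write the parity check matrix as $H = [h_1 \mid h_2 \mid \dots \mid h_n]$, where $h_1, \dots, h_n \in \mathbb{F}_2^m$ are its columns, and recall that $\mathcal{C} = \{c \in \mathbb{F}_2^n : H c^{\top} = 0\}$. First, I would invoke the standard fact that for a linear code the minimal distance $d(\mathcal{C})$ equals the minimum Hamming weight of a non-zero codeword, since the difference of two codewords is again a codeword.

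Next, I would translate the equation $H c^{\top} = 0$ into a statement about columns: for $c = (c_1, \dots, c_n) \in \mathbb{F}_2^n$, the product $H c^{\top}$ equals $\sum_{i=1}^n c_i h_i$. Because $c_i \in \{0,1\}$, this sum is precisely the sum, in $\mathbb{F}_2^m$, of those columns $h_i$ for which $c_i = 1$. Denoting the support of $c$ by $\mathrm{supp}(c) = \{i : c_i = 1\}$, the condition $c \in \mathcal{C}$ becomes $\sum_{i \in \mathrm{supp}(c)} h_i = 0$, and the Hamming weight of $c$ equals $|\mathrm{supp}(c)|$.

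This yields a length-preserving bijection between non-zero codewords $c \in \mathcal{C}$ and non-empty zero-sum subsequences of the sequence $(h_1, \dots, h_n)$: given a non-zero $c$, take the subsequence indexed by $\mathrm{supp}(c)$; given a non-empty subset $I \subseteq \{1, \dots, n\}$ with $\sum_{i \in I} h_i = 0$, take $c$ to be the indicator vector of $I$. Under this bijection, Hamming weight corresponds to length of the subsequence. Taking the minimum on both sides gives the lemma.

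There is no genuine obstacle here; the only thing to be careful about is the characteristic-$2$ point, which ensures that zero-sum subsequences of columns correspond exactly to $\{0,1\}$-linear combinations equal to zero, so that weights and lengths match without any multiplicity issues. Over a larger field one would have to distinguish between subsequences of columns and arbitrary linear dependencies, but in the binary setting the two notions coincide.
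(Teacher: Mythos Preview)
Your argument is correct and is exactly the standard proof of this folkloric fact. Note, however, that the paper does not actually supply a proof of this lemma: it is stated without proof and attributed to \cite{MWS}, so there is nothing in the paper to compare your argument against. Your write-up is a clean and complete justification of the statement; the bijection you describe between non-zero codewords and non-empty zero-sum subsequences (indexed by the support) is precisely the intended content of the lemma, and your remark about characteristic $2$ correctly identifies why ``subsequence'' and ``$\{0,1\}$-linear dependency'' coincide here.
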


In the case of present interest, Cohen and Z\'emor \cite{CZ2} pointed
out a connection between $\bound{i}(C_2^r)$ and coding theory.
As the situation at hand is slightly different from the one in that
paper, and this connection is central to our investigations, we recall
and slightly expand it in some detail.

First, we give a technically useful definition.
In the present context, we call a function
$f: [0,1] \rightarrow [0,1]$ \emph{upper-bounding}
if it is decreasing (not necessarily strictly), continuous and
each $[n,k,d]$ code satisfies
\[
\frac{k}{n} \le f \left( \frac{d}{n} \right).
\]
In other words, upper-bounding functions are the functions intervening in the upper bounds
of the rate of a code by a function of its normalized minimal distance.

We call a function \emph{asymptotically upper-bounding} if it
has the same properties as an upper-bounding function,
except that the inequality only has to hold for $[n,k,d]$ codes with sufficiently large $n$.

Notice that in both cases, the assumptions on decreasingness and continuity
are not restrictive at all since these assumptions are usually fulfilled.

\begin{lemma}
\label{upperbounding}
Let $f$  
be an upper-bounding function.
Let $d$, $n$, and $r$ be three positive integers satisfying
$2 \leq d \leq n-1 $ and
\[
\frac{n-r}{n} > f \left( \frac{d+1}{n} \right),
\]
then
\[
\bound{d}(C_2^r) \leq n.
\]
Moreover, the same assertion holds true for $f$ an asymptotically upper-bounding
function if we impose that $n$ is sufficiently large (depending on $f$).
\end{lemma}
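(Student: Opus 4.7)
My plan is, given a sequence $S$ of length $n$ over $C_2^r$, to dispose of two degenerate cases and then, in the main case, translate the zero-sum problem into a statement about a binary linear code, to which the upper-bounding function $f$ directly applies. The two trivial cases are: if $0$ appears in $S$, then $(0)$ is a zero-sum subsequence of length $1\le d$; if some element $g$ of $S$ has multiplicity at least $2$, then $g+g=0$ gives one of length $2\le d$ (using $d\ge 2$). So I may assume that $S$ consists of $n$ pairwise distinct nonzero elements of $C_2^r$.

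In the remaining case I form the $r\times n$ binary matrix $M$ whose columns are the terms of $S$, and let $\mathcal{C}$ be the binary linear code admitting $M$ as a parity check matrix. Then $\mathcal{C}$ has length $n$ and dimension $k=n-\mathrm{rank}(M)\ge n-r$; note that $n-r\ge 1$ since $(n-r)/n>f((d+1)/n)\ge 0$. By Lemma~\ref{PCM}, the minimum distance $d'$ of $\mathcal{C}$ coincides with the minimum length of a zero-sum subsequence extracted from the columns of $M$, i.e.\ from $S$. To obtain the conclusion $\bound{d}(C_2^r)\le n$, it thus suffices to show $d'\le d$.

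I argue this last point by contradiction. Assume $d'\ge d+1$; since $\mathcal{C}$ is an $[n,k,d']$ code, the defining property of an upper-bounding function gives $k/n\le f(d'/n)$, and since $f$ is decreasing while $d'/n\ge (d+1)/n$, this yields $k/n\le f((d+1)/n)$. Combined with $k/n\ge (n-r)/n$, this contradicts the hypothesis $(n-r)/n>f((d+1)/n)$. Hence $d'\le d$. The asymptotically upper-bounding case is handled by exactly the same argument, with the sole proviso that the inequality $k/n\le f(d'/n)$ is invoked only for $n$ large enough (depending on $f$), which is harmless since we may impose this on $n$ from the start. There is no serious obstacle here; the only subtle point is ensuring that the parity check matrix has distinct nonzero columns so that $\mathcal{C}$ is a meaningful code of length $n$, which is precisely why the zero element and repeated elements are peeled off first.
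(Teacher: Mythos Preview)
Your proof is correct and follows essentially the same approach as the paper: peel off the degenerate cases, form a code from the sequence, and use the upper-bounding property of $f$ to force the minimum distance to be at most $d$. The one difference is that the paper first reduces to the case where the $g_i$ span $C_2^r$ (so the code has dimension exactly $n-r$), whereas you skip this and work with $k=n-\mathrm{rank}(M)\ge n-r$ directly; your route is slightly shorter, though note that when $M$ is not of full row rank it is not, strictly speaking, a parity check matrix, so your appeal to Lemma~\ref{PCM} tacitly uses the (true and trivial) fact that the minimum weight of $\ker M$ equals the shortest column dependence for \emph{any} $M$.
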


\begin{proof}
Let $S=g_1, \dots, g_n$ be an arbitrary finite sequence over $C_2^r$. We shall
prove that it contains a zero-sum subsequence of length at most $d$.

It is immediate that $S$ has a zero-sum subsequence
of length $1$ if and only if $0$ occurs in $S$, and
that $S$ has a zero-sum subsequence of length $2$ if and only
if some element occurs at least twice in $S$.
Thus, since $d \ge 2$, we may assume that
$S$ does neither contain $0$ nor an element more than once \emph{i.e.}
we effectively have to study the case of subsets of $C_2^r\setminus \{0\}$
(and not the one of general sequences in $C_2^r$).

We assert that we may assume that the elements appearing in $S$ generate
$C_2^r$.
To see this, note that if $g$ is an element of $C_2^r$ not contained in
the subgroup generated by the elements of $S$, and if $T$ denotes the sequence
obtained by appending $g$ to $S$,
then each zero-sum subsequence of $T$ is in fact a zero-sum subsequence
of $S$. Thus, if $S$ has no zero-sum subsequence of length at most $d$
and the elements of $S$ do not generate $C_2^r$,
then the longer sequence $T$, defined as above,
neither has a zero-sum subsequence of length at most $d$.
So, it suffices to establish an upper bound on the length of
sequences $S$ such that the elements appearing in $S$ generate
$C_2^r$.

We choose some basis of $C_2^n$. We consider the binary linear code
${\mathcal C} \subset C_2^n$
of length $n$ whose parity check matrix is
$A=[g_1 \mid \dots \mid g_n] \in {\mathcal M}_{r,n}$
(identify the $g_i$'s with their coordinate vectors
with respect to some basis of $C_2^r$, and consider them as column vectors,
and use the just chosen basis of $C_2^n$).
Notice that the rank of this matrix is equal to $r$ in view of our assumption
that the $g_i$'s generate $C_2^r$. Let $m$ be the minimal distance of
${\mathcal C}$. By definition, the code ${\mathcal C}$
is an $[n,n-r,m]$ binary linear code.
But by assumption since $f$ is upper-bounding and
\[
\frac{n-r}{n} > f \left( \frac{d+1}{n} \right),
\]
an $[n,n-r,d+1]$ code cannot exist. This implies that $m < d+1$, or
equivalently $d \geq m$.

We conclude by applying Lemma \ref{PCM} which shows
that $S$  possesses a zero-sum subsequence of length $m$.

The additional claim for asymptotically upper-bounding functions,
is immediate in view of the just given argument.
\end{proof}

\section{Lower bounds}
\label{sec4}

In this section, we establish the lower bounds for
$\Dav_j(C_2^r)$, for large $r$, contained in
Theorems \ref{thm_expl} and \ref{thm_asymp}.
Specifically, we prove the following asymptotic lower bound in $r$,
which immediately yields both lower bounds (replacing
$\log (j+1)$ by $\log j$ is asymptotically, in $j$, irrelevant).

\begin{proposition}
\label{prop6}
Let $j$ be a positive integer. Then
\[
\Dav_{j}(C_2^r)\geq \log 2 \frac{j}{\log (j+1)}\ r
\]
as $r$ tends to infinity.
\end{proposition}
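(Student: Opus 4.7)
The plan is a random-construction argument, generalising Koml\'os's method for the case $j=2$. I would fix an integer $n$ (to be optimised below) and draw $v_1,\ldots,v_n \in C_2^r$ independently and uniformly at random; the aim is to show that with positive probability the resulting sequence admits no $j$ disjoint non-empty zero-sum subsequences, for then $\Dav_j(C_2^r) \ge n+1$.

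The existence of $j$ disjoint non-empty zero-sum subsequences of $(v_1,\ldots,v_n)$ corresponds exactly to the existence of an ordered tuple $(T_1,\ldots,T_j)$ of pairwise disjoint non-empty subsets of $\{1,\ldots,n\}$ with $\sum_{i\in T_\ell} v_i = 0$ for every $\ell$. I would bound the expected number of such ``obstructing'' tuples by linearity of expectation. For a fixed such tuple the $j$ partial sums involve disjoint subsets of the independent variables $v_i$, hence are themselves mutually independent; each is a sum of at least one uniform random vector and is therefore uniformly distributed on $C_2^r$, so the probability that all of them vanish is exactly $2^{-jr}$. The number of ordered $j$-tuples of pairwise disjoint (possibly empty) subsets of $\{1,\ldots,n\}$ equals $(j+1)^n$, since each index is assigned to exactly one of the $j$ subsets or to none; requiring non-emptiness only decreases this count. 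Linearity then gives an upper bound $(j+1)^n\cdot 2^{-jr}$ on the expected number of obstructions.

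Whenever this upper bound is strictly less than $1$, some realisation is obstruction-free, providing a concrete sequence of length $n$ in $C_2^r$ without $j$ disjoint non-empty zero-sum subsequences. The condition $(j+1)^n < 2^{jr}$ rearranges to $n < jr\log 2 / \log(j+1)$, and choosing the largest integer $n$ satisfying it yields
\[
\Dav_j(C_2^r) \;\ge\; n+1 \;\ge\; \log 2 \, \frac{j}{\log(j+1)} \, r,
\]
establishing the proposition. I do not anticipate a genuine obstacle: the only point requiring real care is the independence of the $j$ partial sums, which follows immediately from the disjointness of the $T_\ell$'s combined with the independence of the $v_i$'s, and the identification of the correct constant hinges on the precise count $(j+1)^n$, which emerges naturally from viewing a disjoint $j$-tuple of subsets as a function $\{1,\ldots,n\} \to \{0,1,\ldots,j\}$.
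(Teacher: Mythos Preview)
Your argument is correct and is the natural probabilistic phrasing of the paper's own counting argument; both hinge on the same union bound and the same count $(j+1)^n$ of disjoint $j$-tuples of index sets, and both arrive at the identical threshold $(j+1)^n<2^{jr}$.

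The presentational difference is worth noting. The paper passes through the correspondence between sequences over $C_2^r$ and parity-check matrices of $[n,n-r]$ binary codes, translates ``$j$ disjoint non-empty zero-sum subsequences'' into ``$j$ non-zero codewords with pairwise disjoint supports,'' and then counts \emph{codes} (that is, $(n-r)$-dimensional subspaces of $C_2^n$) using Gaussian binomial coefficients, showing that the proportion of $j$-inadmissible codes is at most $(j+1)^n\gaussian{n-j}{n-r-j}\big/\gaussian{n}{n-r}\le (j+1)^n 2^{-jr}$. Your route---sampling directly in $(C_2^r)^n$ and using independence of the partial sums over disjoint index sets---is more elementary: it avoids the code correspondence, the $q$-binomials, and the preliminary reduction to sequences of distinct non-zero generating elements. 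What the paper's formulation buys is a seamless fit with the coding-theoretic framework used elsewhere in the article (in particular for the upper bounds); what yours buys is a shorter, self-contained argument that in fact yields the stated inequality for \emph{every} $r$, not just asymptotically.
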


Notice that the case $j=1$ is essentially trivial,
while the case $j=2$ of this result, formulated in the context
of coding theory, is attributed to  Koml\'os in \cite{CL} and \cite{M}.
Indeed, our proof will generalize Koml\'os' approach (in the form given
in \cite{CL}). This proof can be seen as probabilistic, yet
we prefer to present it via a direct counting argument.
Notice that it is non-constructive.

We need the following well-known lemma (see \emph{e.g.} \cite{MWS}).

\begin{lemma}
\label{coeff}
Let $n$ and $k$ be two positive integers, $n \geq k$.
In an $n$-dimensional vector space over a field with $2$
elements, the number of $k$-dimensional subspaces
is equal to the $2$-ary binomial coefficient defined as
\[
\gaussian{n}{k} =
\frac{(2^n-1)\cdots (2^{n-k+1} -1)}{(2^k -1) \cdots (2-1)}.
\]
Moreover, the number of $k$-dimensional subspaces containing
a fixed $j$-dimensional subspace, $k \geq j$, is equal to
\[
\gaussian{n-j}{k-j}.
\]
\end{lemma}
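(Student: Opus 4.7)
My plan is the textbook double-counting argument, carried out in two stages corresponding to the two assertions.

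For the first formula, I would count pairs (ordered linearly independent $k$-tuple, $k$-dimensional subspace it spans) in two ways. On the one hand, choosing such an ordered tuple in the ambient $n$-dimensional space amounts to picking the first vector from the $2^n-1$ nonzero vectors, the second from the complement of a line ($2^n-2$ choices), the third from the complement of a plane ($2^n-4$ choices), and so on: in total
\[
\prod_{i=0}^{k-1}\bigl(2^n-2^i\bigr)
\]
ordered linearly independent $k$-tuples. On the other hand, every $k$-dimensional subspace $U$ has the same number of ordered bases, namely the above quantity with $n$ replaced by $k$, that is $\prod_{i=0}^{k-1}(2^k-2^i)$. Dividing and pulling out the common factor $2^i$ from each term of numerator and denominator would then leave exactly
\[
\frac{(2^n-1)(2^{n-1}-1)\cdots(2^{n-k+1}-1)}{(2^k-1)(2^{k-1}-1)\cdots(2-1)}=\gaussian{n}{k},
\]
as desired.

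For the second formula, I would reduce to the first by passing to the quotient. Let $V$ be the fixed $j$-dimensional subspace. The correspondence theorem for vector spaces gives a bijection between $k$-dimensional subspaces $U$ of the ambient space satisfying $V\subseteq U$ and $(k-j)$-dimensional subspaces of the quotient $C_2^n/V$, which is itself an $(n-j)$-dimensional $\mathbb{F}_2$-vector space. Applying the first formula in that quotient immediately yields the count $\gaussian{n-j}{k-j}$.

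There is really no obstacle here; the only minor bookkeeping point is checking that the factor $2^i$ cancels cleanly in the two products so that one recovers the Gaussian binomial coefficient in the stated form. Everything else is standard linear algebra over $\mathbb{F}_2$.
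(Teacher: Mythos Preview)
Your argument is correct and is exactly the standard textbook proof of the Gaussian binomial coefficient formula. Note, however, that the paper does not actually prove this lemma: it simply states it as well-known and refers the reader to the literature (MacWilliams--Sloane). So there is no ``paper's own proof'' to compare against; your write-up fills in precisely the kind of argument one would find in such a reference.
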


We can now prove Proposition \ref{prop6}.

\begin{proof}[Proof of Proposition \ref{prop6}]
For the entire proof, we fix an
arbitrary positive integer $j>1$. As $ \Dav_1(C_2^r) = \Dav(C_2^r) = r + 1 $, we can ignore
the case $j=1$.

We shall now prove that for each integer $n$  larger than or equal
to $r+j$ (this condition is technically convenient later on) and less than
$\left( j \log 2 / \log (j+1) \right)\ r$ (notice that, for $r$ large
enough, since $j \geq 2$, such $n$'s always exist), one can find a
sequence of cardinality $n$ which does not contain $j$ disjoint
zero-sum subsequences. This will prove our result.

To each sequence $S=g_1,\dots, g_n$ over $C_2^r$, with $n \geq r+j$,
having the properties that $S$ does neither contain $0$ nor an element
at least twice, we associate, as described above, an $[n,n-r]$ code
(contained in $C_2^n$, and we fix some basis). This linear code,
automatically, has a minimal distance of at least $3$.
Conversely, any $[n,n-r,d]$ code with $d \geq 3$ can be obtained in this way (cf.~\cite{CZ2}).

The following remark is central: the condition that $S$ has $j$ disjoint
zero-sum subsequences translates to the condition that the associated code
contains $j$ non-zero codewords $c_1,\dots,c_j$ such that intersection
of the support (the set of indices of non-zero coordinates) of $c_u$ and
$c_v$ is empty for all distinct $u,v \in \{1, \dots, j\}$.
A code having this property will be called
{\em $j$-inadmissible}, otherwise it will be called {\em $j$-admissible}.

We first produce an upper bound on the total number of $[n,n-r]$ codes
that are $j$-inadmissible.

By definition any $j$-inadmissible code contains $c_1, \dots, c_j$
with the above mentioned property.
These $c_i$'s generate a $j$-dimensional vector space
since the $c_i$'s are certainly independent: the non-zero
coordinates of each $c_i$ are unique to that element.

Let $\mathcal{V}$ denote the set of all
subsets $\{d_1, \dots , d_j\} \subset C_2^n \setminus\{0\}$
such that the intersection of the support of $d_u$ and $d_v$
is empty for all distinct $u,v \in \{1, \dots ,j\}$; thus,
in particular, all the $d_i$'s are distinct.

We note that a code ${\mathcal C}$ is $j$-inadmissible if and only if
$V \subset {\mathcal C}$ for some $V \in \mathcal{V}$
(this $V$ is not  necessarily unique).
Moreover Lemma \ref{coeff} implies that for each $V \in \mathcal{V}$
there are $\gaussian{n-j}{n-r-j}$ codes containing
$V$; note that if $V \subset {\mathcal C}$ then ${\mathcal C}$
also contains the vector space generated by $V$, which is $j$-dimensional,
and apply Lemma \ref{coeff}. It follows that
the total number of $j$-inadmissible codes cannot exceed
\[
|\mathcal{V}|\gaussian{n-j}{n-r-j}.
\]

In order to estimate $|\mathcal{V}|$ we make the following remark:
each element of $\{ 1, \dots, n \}$ has to belong to either the support of
exactly one of the $d_i$'s or to none (obviously, the information
which element of $\{1, \dots , n\}$ belongs to each of the $d_i$'s
uniquely determines the element of $\mathcal{V}$). Thus,
for each element of $\{ 1, \dots, n \}$ there are (at most) $j+1$
possibilities.
This readily gives $|\mathcal{V}| \leq (j+1)^n$. (We ignore the
slight improvements that could be obtained from the fact that
the ordering of the $d_i$'s is irrelevant and the supports are non-empty,
as they would not affect our estimate).

We therefore infer that the total number of $j$-inadmissible $[n,n-r]$
codes is bounded above by
\[
(j+1)^n \gaussian{n-j}{n-r-j}.
\]

Again by Lemma \ref{coeff}, it follows that the ratio of the total
number of $j$-inadmissible $[n,n-r]$ codes divided by the total number
of $[n,n-r]$ codes is bounded above by
\begin{eqnarray*}
\frac{(j+1)^n \gaussian{n-j}{n-r-j}}{\gaussian{n}{n-r}} & = & (j+1)^n
\prod_{k=n-j+1}^n  \frac{2^{k-r}-1}{2^k -1}  \\
& \leq & (j+1)^n \prod_{k=n-j+1}^n \frac{2^{k-r}}{2^{k}}
\\
& = & (j+1)^n 2^{-rj}= 2^{ n \log_2 (j+1)-rj}.
\end{eqnarray*}
Here, $\log_2$ refers to the logarithm in basis $2$.

Thus, it follows that as soon as $(n \log_2 (j+1) - rj)$ is negative,
that is
\[
\frac{n}{r} < \frac{j}{\log_2 (j+1)},
\]
the existence of at least one admissible code is guaranteed.

From this we deduce (the condition $n \ge r + j$ becomes irrelevant)
\[
D_j (C_2^r) \geq \log 2 \frac{j}{\log (j+1)}\ r
\]
as $r$ tends to infinity.
\end{proof}

\section{Upper bounds}
\label{sec3}

\subsection{The crucial lemma}

The following lemma is central for our investigations.

\begin{lemma}
\label{centrallemma}
Let $f$ be an asymptotic upper-bounding function.
Let $j$ be a positive integer and $p$ be a real number
such that one has $\Dav_j(C_2^r)\le pr$ for each sufficiently large $r$.
Let finally $c$ denote a solution to the inequality
\[
\frac{p+c-1}{p+c} > f\left( \frac{c}{p+c} \right).
\]

Then for each sufficiently large integer $r$, we have
\[
\Dav_{j+1}(C_2^r)\le (p+c)r.
\]
\end{lemma}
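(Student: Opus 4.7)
The plan is to feed the hypothesis into the recursive inequality \eqref{eq_rec} with a carefully chosen index and then invoke Lemma \ref{upperbounding} to control the resulting $\bound{i}$ term. Concretely, for each sufficiently large $r$ I would set
\[
i = \lfloor c r \rfloor \qquad \text{and} \qquad n = \lfloor (p+c) r \rfloor + 1,
\]
the idea being that $i$ is chosen so that the first argument of the max in \eqref{eq_rec} is already under control: by the hypothesis on $p$, for $r$ large one has $\Dav_j(C_2^r) + i \le pr + cr \le (p+c)r$.

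Next I would handle the second argument $\bound{i}(C_2^r) - 1$ via Lemma \ref{upperbounding} applied with the asymptotic upper-bounding function $f$, with $d = i$ and $n$ as above. As $r \to \infty$, the two ratios that appear in the hypothesis of that lemma converge:
\[
\frac{n-r}{n} \longrightarrow \frac{p+c-1}{p+c}, \qquad \frac{d+1}{n} = \frac{i+1}{n} \longrightarrow \frac{c}{p+c}.
\]
Since $f$ is continuous and decreasing, the strict inequality $\frac{p+c-1}{p+c} > f\bigl(\frac{c}{p+c}\bigr)$ assumed in the statement passes to a neighbourhood: for all sufficiently large $r$ one has $\frac{n-r}{n} > f\bigl(\frac{d+1}{n}\bigr)$. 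The side conditions $2 \le d \le n-1$ are automatic for large $r$ (assuming $c > 0$, which is needed for the inequality to have solutions anyway). Lemma \ref{upperbounding} then yields $\bound{i}(C_2^r) \le n \le (p+c)r + 1$, so $\bound{i}(C_2^r) - 1 \le (p+c)r$.

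Finally, combining these two estimates with the inequality \eqref{eq_rec} gives
\[
\Dav_{j+1}(C_2^r) \le \max\bigl\{\Dav_j(C_2^r) + i,\; \bound{i}(C_2^r) - 1\bigr\} \le (p+c)r,
\]
which is the desired conclusion. The only real subtlety is the rounding: one must check that, despite the floor functions in the definitions of $i$ and $n$, the limits of $\frac{n-r}{n}$ and $\frac{d+1}{n}$ are exactly $\frac{p+c-1}{p+c}$ and $\frac{c}{p+c}$, so that continuity of $f$ is genuinely applicable. This is the main (and only mild) technical point; once it is in place, the lemma follows almost formally from \eqref{eq_rec} together with Lemma \ref{upperbounding}.
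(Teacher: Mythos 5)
Your proposal is correct and follows essentially the same route as the paper: the same choice $i=\lfloor cr\rfloor$, the same (up to an irrelevant $+1$) choice of $n$, and the same combination of Lemma \ref{upperbounding} with the recursion \eqref{eq_rec}, with the floor/continuity issue handled just as the paper does. No gaps.
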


\begin{proof}
By assumption, we have that
\[
\frac{(p+c)r - r}{(p+c)r} = \frac{p+c-1}{p+c} >
f\left( \frac{c}{p+c} \right)
=f\left( \frac{cr}{(p+c)r} \right) \geq f\left( \frac{cr + 1}{(p+c)r} \right),
\]
$f$ being decreasing. If we substitute $\lfloor (p+c)r \rfloor $
for $n$ and $\lfloor cr \rfloor$ for $d$,
we obtain (for $r$ sufficiently large and by the continuity of $f$)
\[
\frac{n-r}{n} > f \left( \frac{d+1}{n} \right),
\]
an equation of the type given in Lemma \ref{upperbounding} and
we may therefore deduce that
\[
\bound{\lfloor cr \rfloor}(C_2^r) \leq n = \lfloor (p+c)r \rfloor \leq (p+c)r.
\]
This now implies, by \eqref{eq_rec} and using our assumption, that
\begin{eqnarray*}
\Dav_{j+1} (C_2^r) & \le & \min_{i\in \mathbb{N}} \max \{ pr + i ,
\bound{i}(C_2^r) - 1 \} \\
        & \le & \max \{ pr + \lfloor cr \rfloor , \bound{\lfloor cr \rfloor }(C_2^r) - 1 \} \\
        & \le & \max \{ (p + c)r , (p+c)r - 1 \}= (p+c)r,
\end{eqnarray*}
as wanted.
\end{proof}

\subsection{The upper bounds in Theorem \ref{thm_expl}}

To obtain a proof of these upper bounds,
we use the approach described in Section \ref{sec2}
in combination with a bound on the parameters of linear codes originally due to
McEliece, Rodemich, Rumsey, and Welch \cite{MRRW},
that we recall here (see \emph{e.g.} \cite{MWS}):
Let us define $h$ to be the binary entropy function, that is
(for  $0 \leq u \leq 1$),
\[
h (u)=-u \log_2 u - (1-u) \log_2 (1-u)
\]
and $g(u)= h((1- \sqrt{1-u})/2)$. Then the function $f$ equal to
\begin{equation*}
\label{mrrw1}
f_1 (\delta) =
\begin{cases}
\min_{0\le u \le 1-2 \delta}
\left( 1 + g(u^2)-g(u^2 + 2\delta u +2\delta) \right) &
\text{ if $\delta \leq 1/2$} \\
0 & \text{ otherwise}
\end{cases}
\end{equation*}
is an asymptotically upper-bounding function and we may thus apply
Lemma \ref{centrallemma}.

We define a sequence $(u_j)_{j \in \N}$ recursively. We set $u_1=1$ and, for $j \geq 1$,
let $u_{j+1}$ be defined as the solution (if it exists, which is always
the case in practice, this solution has to be unique) to the equation
\[
1 - \frac{1}{U_j +u_{j+1}} =
f_1 \left( \frac{u_{j+1}}{U_j +u_{j+1}} \right)
\]
where we define $U_j = u_1 + \cdots + u_j$, the sum of the $j$ first
values of the sequence $(u_j)_{j \in \N}$. The sequence  $(U_j)_{j \in
  \N}$ corresponds to the coefficient
in the upper bound of Theorem \ref{thm_expl} ($U_1=1, U_2 =
1.395\dots, U_3 = 1.770\dots, \dots$), which therefore follows by
a repeated application of Lemma \ref{centrallemma}.

We point out that for our problem it is actually useful to use this
bound, as opposed to bounds whose numerical evaluation is simpler,
since for our problem we encounter $\delta$ in a fairly wide range.
A simpler strategy, regarding computations, which is --
in view of classical results on these bounds -- obviously worse,
though only slightly so, would be to use another bound proved in
\cite{MRRW} (see also \cite{MWS}), that is the function
\[
f_2 (\delta) =
\begin{cases}
h\left( 1/2  - \sqrt{\delta(1-\delta)} \right) & \text{ if $\delta \leq 1/2$} \\
0 & \text{ otherwise}
\end{cases}
\]
for the first few values, namely $2,3,4$ (for $2$ this yields
the identical bound, yet a slightly weaker one for $3,4$)
where $\delta$ is still fairly large, and then to switch
to using a bound that is better for small $\delta$ such
as the Elias--Bassalygo bound (see \cite{B} and \cite{MWS}), that is
\[
f_3 (\delta) =
\begin{cases}
1 - h \left( (1- \sqrt{1-2 \delta})/2 \right)   & \text{ if $\delta \leq 1/2$}\\
0 & \text{ otherwise}
\end{cases}
\]
with $h$ as above (this is the case $u=0$ of \eqref{mrrw1}).
Using this approach, we would for instance get
the values  $1.776$ for $j=3$, $2.147$ for $j=4$, $2.512$ for $j=5$
and $4.172$ for $j=10$ that is, slightly but noticeably weaker bounds.

\subsection{The upper bound in Theorem \ref{thm_asymp}}

For the asymptotics we use the method described in Section
\ref{sec2} in combination with the Hamming bound, that is
the function $f_4$ defined by
\begin{equation*}
\label{hamming}
f_4 ( \delta)= 1 - h \left( \frac{\delta}{2} \right).
\end{equation*}
We use this bound as the resulting analytic expressions and
asymptotic calculations are simpler than for stronger bounds.
On the other hand, asymptotically, using say Elias--Bassalygo
would not yield a better result.
Interestingly, for this particular choice of
upper-bounding function, a very elementary proof (cf.~below) of the
conclusion of Lemma \ref{centrallemma} can be obtained with additive
means, avoiding the coding theoretic argument that this function is
an upper-bounding function. This makes our proof of Theorem \ref{thm_asymp}
essentially self-contained.

\begin{proof}
We start with a sequence $S=g_1, \dots, g_n$ in $C_2^r$, where $n = \lfloor (p+c)r \rfloor$.
The set $\{1, \dots, n\}$ has
\[
\sum_{j=0}^{\lfloor cr/2 \rfloor} \binomial{\lfloor(p+c)r \rfloor}{j}
> \binomial{\lfloor(p+c)r\rfloor}{\lfloor cr/2\rfloor}
\]
subsets of size at most $cr/2$.
Since by assumption
\[
(p+c) \  h \left( \frac{c}{2(p+c)} \right) >1
\]
and since, as $r$ tends to infinity, $\binom{\alpha r}{\beta r} \sim c\ 2^{\alpha\ h (\beta /
  \alpha) r}$ (for some constant $c$ depending on $\alpha$ and $\beta$),
it follows that, for sufficiently large $r$, the number of such sets exceeds
$2^r$. This implies that there exist two distinct subsets ${\rm I, J}$
of $\{ 1,\dots, n \}$ such that
\[
\sum_{i \in {\rm I}} g_i = \sum_{i \in {\rm J}} g_i.
\]
This yields
\[
\sum_{i \in {\rm I} \triangle {\rm J}} g_i = 0,
\]
-- where ${\rm I} \triangle {\rm J}$ denotes the symmetric difference of
${\rm I}$ and ${\rm J}$ --, that is a  non-empty zero-sum sequence of
length at most $cr$.
\end{proof}

Using the upper-bounding function $f_4$,
we build a sequence, let us call it $(v_j)_{j \in \N}$
such that $v_1=1$ and
\begin{equation}
\label{seqV}
\frac{1}{V_j +v_{j+1}} = h \left( \frac{v_{j+1}}{2(V_j +v_{j+1})} \right),
\end{equation}
where $V_j = v_1 + \cdots + v_j$ for any integer $j \geq 1$.
It can be checked easily that such a sequence is well-defined and that for any
$j \geq 1$, one has $v_j \leq 1$. Then, rewriting \eqref{seqV} as
\[
1 - \frac{1}{V_j +v_{j+1}} = f_4 \left( \frac{v_{j+1}}{V_j +v_{j+1}} \right),
\]
we may repeatedly apply Lemma \ref{centrallemma} and obtain the inequality
\[
\Dav_{j}(C_2^r) \le V_j \ r
\]
for each integer $j$ and all sufficiently large $r$ (relative to $j$).

By the lower bounds in Theorem \ref{thm_asymp} proved in the preceding
Section, we already know that $V_j$ tends to infinity when $j$ tends
to infinity since
\begin{equation}
\label{lbr}
V_j \gtrsim \frac{D_j (C_2^r)}{r} \gg \frac{j}{\log j}
\end{equation}
as $r$ tends to infinity~; while $v_j$ remains bounded by $1$.

We can now develop \eqref{seqV} to obtain the desired asymptotics.
At the first order when $j$ tends to infinity we obtain
\[
\frac{1}{V_j+v_{j+1}} = - \frac{v_{j+1}}{2(V_j+v_{j+1})} \log_2
\left( \frac{v_{j+1}}{2 (V_j+v_{j+1})} \right)
  + O \left( \frac{v_{j+1}}{V_j+v_{j+1}} \right)
\]
or equivalently
\begin{eqnarray*}
\frac{2 \log 2}{v_{j+1}} & = & - \log \left( \frac{v_{j+1}}{2 (V_j+v_{j+1})} \right) +O(1) \\
             & = & \log ( V_j+v_{j+1} )  - \log v_{j+1} + O(1) \\
             & = & \log V_j  - \log v_{j+1} + O(1), \\
\end{eqnarray*}
and therefore
\[
\frac{2 \log 2}{v_{j+1}} + \log v_{j+1} = \log V_j + O(1).
\]
It follows that, as $j$ tends to infinity,
\[
v_{j+1} \sim \frac{2 \log 2}{\log V_j}.
\]

By \eqref{lbr}, we obtain
\[
V_{j+1} - V_{j}=v_{j+1} \lesssim \frac{2 \log 2}{\log j}
\]
and therefore, summing all these estimates yields
\[
V_j \lesssim 2 \log 2 \sum_{k=1}^{j-1} \frac{1}{\log k} \sim 2 \log 2
\frac{j}{\log j}
\]
from which the upper bound of Theorem \ref{thm_asymp} follows.

\section{Heuristics}
\label{sec_heu}

In this section, we discuss the quality of the bound in Theorem \ref{thm_expl}.
In investigations on intersecting codes, which are equivalent to determining $\Dav_2(C_2^r)$,
Cohen and Lempel \cite{CL} put forward the heuristic that one can expect
that the rate of an intersecting code will not exceed the Gilbert--Varshamov bound
\[1- h(\delta).\]
Extrapolating this heuristic to the investigation of $\Dav_j(C_2^r)$,
which might be too optimistic as the restrictions imposed on the code
corresponding to extremal example associated to $\Dav_j(C_2^r)$ get weaker as $j$ increases,
this suggests to use the function $1-h(\delta)$ as if it \emph{were} an upper-bounding function.

An argument similar to the one in the proof of Theorem \ref{thm_asymp}
thus yields an optimistic heuristic bound $\beta_j \lesssim \log 2
\frac{j}{\log j}$.

Regarding heuristic numerical values we get, for example, $1.294$ for $j=2$, $1.550$ for $j=3$,
$1.784$ for $j=4$, $2.003$ for $j=5$, and $2.984$ for $j=10$.

\section*{Acknowledgment}
The authors would like to thank G.~Cohen for helpful discussions.

\end{document}